\documentclass[11pt]{amsart}

\usepackage{hyperref}
\usepackage[utf8]{inputenc}
\usepackage[english]{babel}
\usepackage[T1]{fontenc}
\usepackage{amsthm}
\usepackage{amssymb}
\usepackage{amsmath}
\usepackage{mathtools}
\usepackage{pdfpages}
\usepackage{changes}
\usepackage{tikz-cd}
\usepackage{soul}

\overfullrule=1mm

\newcommand{\Z}{\mathbb{Z}}

\newcommand{\GL}{\mathrm{GL}}
\newcommand{\id}{\mathrm{id}}

\makeatletter
\numberwithin{equation}{section}
\numberwithin{figure}{section}
\numberwithin{table}{section}
\newtheorem{theorem}{Theorem}[section]

\newtheorem{corollary}[theorem]{Corollary}
\newtheorem{proposition}[theorem]{Proposition}
\newtheorem{definition}[theorem]{Definition}

\newtheorem{conjecture}[theorem]{Conjecture}

\newtheorem{remark}[theorem]{Remark}

\makeatother

\def\Z{{\mathbb{Z}}}

\begin{document}

\keywords{Yang-Baxter equation, set-theoretic solution, Braces,
          indecomposable solution}

\title[Indecomposable solutions of the Yang-Baxter equation]{Indecomposable solutions of the Yang-Baxter equation with permutation
      group of sizes $pq$ and $p^2q$}
\author{S. Ram\'irez}

\address{IMAS-CONICET and Depto. de Matem\'atica, FCEN, Universidad de Buenos Aires, Pabell\'on~1,
Ciudad Universitaria, C1428EGA, Buenos Aires, Argentina}
\email{sramirez@dm.uba.ar}

\begin{abstract}
In this paper we study the problem of classification of indecomposable solutions
of the Yang-Baxter equation. Using a scheme proposed by Bachiller, Cedó, and
Jespers, and recent advances in the classification of braces we classify all
indecomposable solutions with some particular permutation groups. We do this
for all groups of size $pq$, all abelian groups of size $p^2q$ and all dihedral
groups of size $p^2q$.
\end{abstract}

\maketitle

\section{Introduction}

The study of solutions of the Yang-Baxter equation originates from the works in
physics of Yang
\cite{yang1967many-body} and Baxter \cite{baxter1972partition}. In 1990 Drinfeld
\cite{drinfeld1990original} proposed the study of the class of set-theoretical
solutions of Yang-Baxter equation (see Definition~\ref{def:solution}).

The foundational works on the algebraic study of these solutions came later in
the works of Etingof, Schedler and Soloviev \cite{ESS1999set-theoretical} and
Gateva-Ivanova and Van den Bergh \cite{gateva-ivanova1998I-type}. In the years
following a great number of connections to other areas of algebra have been found,
e.g. Hopf-Galois structures \cite{vendramin2018skew-braces}, radical rings
\cite{rump2007braces} and Garside groups
\cite{chouraqui2010garside,dehornoy2015yang-baxter}.

One of main problems is that of classification and construction of solutions.
Although there have been recent improvements in the explicit construction of all
solutions of small sizes \cite{vendramin2022enumeration}, this approach does not
look feasible in general. There are for example
almost five million involutive (see Definition~\ref{def:involutive}) solutions of
size 10, the largest size computed. There seems to be a computational limit
for direct calculations.

One might try considering a class of simpler solutions from which one might construct
all the rest, and try to classify these simpler ones. One such approach is to
consider solutions that cannot be written as a disjoint union of two other
solutions. These \emph{indecomposable} solutions were defined originally in
\cite{ESS1999set-theoretical}. Classifying indecomposable solutions seems to be
a more approachable objective, there are, for example, only 36 indecomposable
solutions of size 10. This class of solutions has been intensively studied
with many recent results
\cite{MR4388351,MR4330443,MR4308636,MR4163866,MR4116644,MR4085769,MR3958100,MR1848966,
vendramin2021decomposition,camp2021criterion}
In this paper we give a classification of all indecomposable solutions whose permutation
group (see Definition~\ref{def:involutive}) has size $pq$, or
is abelian or dihedral and of size $p^2q$, where $p$ and $q$ are distinct primes.

\section{Preliminaries}

In this section we collect the basic definitions and results we will need about
solutions and braces, and go over the scheme proposed by Bachiller, Cedó and Jespers
\cite{BCJ2016solutions} that we will use for the classification.

\subsection{Solutions and braces}
We will restrict ourselves to non-degenerate solutions, defined in the following
way:
\begin{definition}\label{def:solution}
A \emph{non-degenerate solution to the Yang-Baxter equation} is a pair $(X,r)$,
with $X$ a non-empty set and $r:X\times X\to X\times X$ a bijection that satisfies
\[
(r\times 1)(1\times r)(r\times 1) = (1\times r)(r\times 1)(1\times r),
\]
and such that all the maps $\sigma_x,\tau_y$ defined by $r(x,y)=(\sigma_x(y),\tau_y(x))$
are bijections.
\end{definition}
Besides this we will also restrict ourselves to involutive solutions.
\begin{definition}\label{def:involutive}
A solution of the set-theoretical Yang-Baxter $(X,r)$ is called \emph{involutive}
if $r^2=\id$.

In this case the group generated by the maps $\sigma_x$ is called the
\emph{permutation group} of the solution.
\end{definition}
In what follows \emph{solution} will always mean non-degenerate involutive solution
of the Yang-Baxter equation. These restrictions guarantee that the permutation
group will be well-behaved and nicely reflect properties of the solution. With
these restrictions the indecomposables solutions described in the Introduction
can be alternatively defined in the following way:
\begin{definition}\label{def:indecomposable}
A solution $(X,r)$ is called indecomposable if its permutation group acts transitevely
on $X$.
\end{definition}

The permutation group of a (involutive) solution has the additional structure of
a brace.
\begin{definition}\label{def:brace}
A \emph{brace} is a triple $(B,+,\circ)$, with $(B,+)$ an abelian group,
the additive structure of the brace, and $(B,\circ)$ a group, the multiplicative
structure, satisfying
\[
a\circ(b+c) = a\circ b - a + a\circ c.
\]

A brace is called \emph{trivial} if the additive and multiplicative structures
are the same.
\end{definition}
This structure was originally described in \cite{rump2007braces}, with this
definition first appearing \cite{cedo2014braces}, and is a
fundamental tool in the classification process. The multiplicative structure of
the brace associated to a solution is the usual composition of permutations. We
will not find it necessary to know how to give the additive structure.

In a brace, $(B,+,\circ)$, the multplicative group has a natural action (by group
automorphisms) on the additive group. This action is usually noted by
$\lambda_{-}:(B,\circ)\to \operatorname{Aut}(B,+)$, and is given by
\[
\lambda_{a}(b) = -a+ a\circ b.
\]

\subsection{Classification scheme}
We now go over the scheme developed by Bachiller, Cedó and Jespers
in \cite{BCJ2016solutions} for classifying all solutions that have a given brace
as permutation group. A related scheme in terms of coverings of solutions was
developed by Rump in \cite{MR4116644}.

To give a solution
with brace $B$ one must first choose a set of elements of $B$ such that their
orbits under the $\lambda$ action additively generate $B$. Then for each of
these elements one must choose a subgroup of its stabilizer, in such a way that
the intersection of all of their normal cores is trivial. The solution set is
then the disjoint union of the quotient by these subgroups.
By \cite[Theorem 3.1]{BCJ2016solutions} all solutions can be constructed this way.

On the other hand in \cite[Theorem 4.1]{BCJ2016solutions} they characterize when
two of these solutions are isomorphic. For this to happen there must be a brace
automorphism that maps the orbits of the elements chosen for the first to the orbits
of the elements chosen for the second one. Moreover, the corresponding chosen
subgroups must be conjugate.

Since we are going to be focused on indecomposable solutions this scheme simplifies
considerably. Since indecomposable solutions are precisely those in which the
permutation group acts transitively, in the construction we must choose a
single element. So rather than consider all possible ways in which a union of
orbits may generate the group,
we need only understand which orbits generate the group. The following observation
will be particularly useful for this:
\begin{remark}\label{rmk:orbit-order}
Since all the elements of an orbit have the same additive order if that orbit
generates, the order of these elements must be divisible by all the primes dividing
the size of the group.
\end{remark}

A second important consequence of choosing a single element is that the intersection
we need to consider contains a single subgroup. We are then restricted to choosing
core-free subgroups of the stabilizer of an element. This further simplifies when
considering abelian groups, as in this case the only core-free subgroup is the
trivial one. Moreover, it turns out that even in the case of the dihedral groups
this is the only subgroup we need to consider.

We can then restate the results of \cite{BCJ2016solutions} for this simplified
situation as follows:

\begin{theorem}[Bachiller-Cedó-Jespers]
Let $B$ be a brace, $x\in B$ such that the set $\{\lambda_b(x):x\in B\}$ generate
the additive group of brace, and $K<B$ a core-free subgroup of the multiplicative group
such that $\lambda_k(x)=x\,\forall k\in K$. Then there is a natural solution
structure on $X/K$. This solution is indecomposable and its structure group is
isomorphic to $B$ as a brace.

Moreover any indecomposable solution with $B$ as its structure brace has this form.
\end{theorem}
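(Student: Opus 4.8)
The plan is to derive both halves of the statement from \cite[Theorem 3.1]{BCJ2016solutions} by specializing the general scheme to the case in which the permutation group acts transitively, which by Definition~\ref{def:indecomposable} is exactly the indecomposable case. Recall the general construction: a solution with permutation brace $B$ is produced from a family $(x_i)_{i\in I}$ of elements of $B$ whose $\lambda$-orbits together additively generate $(B,+)$, together with, for each $i$, a subgroup $K_i$ of the $\lambda$-stabilizer of $x_i$ in $(B,\circ)$ satisfying $\bigcap_{i\in I}\mathrm{core}_{(B,\circ)}(K_i)=1$; the underlying set is $\bigsqcup_{i\in I}(B/K_i)$, and \cite[Theorem 3.1]{BCJ2016solutions} guarantees that this carries a solution structure whose permutation brace is $B$, and that conversely every solution with permutation brace $B$ arises in this way.

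For the forward direction I would apply the construction to the one-element family $I=\{\ast\}$ with $x_{\ast}=x$ and $K_{\ast}=K$. The assumption that $\{\lambda_b(x):b\in B\}$ generates $(B,+)$ is exactly the generation condition; the assumption $\lambda_k(x)=x$ for all $k\in K$ says precisely that $K$ is contained in the $\lambda$-stabilizer of $x$; and for a single subgroup the condition $\bigcap_{i}\mathrm{core}(K_i)=1$ collapses to $\mathrm{core}_{(B,\circ)}(K)=1$, i.e.\ to $K$ being core-free. Hence \cite[Theorem 3.1]{BCJ2016solutions} furnishes a solution on $X=B/K$ whose permutation brace is $B$. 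It then remains only to check indecomposability: the underlying set is the single coset space $B/K$, on which the permutation group acts transitively --- the construction makes $(B,\circ)$ act on $B/K$ by a coset action, which is transitive, and core-freeness of $K$ forces this action to be faithful so that the permutation group is all of $B$ --- and so Definition~\ref{def:indecomposable} gives indecomposability.

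For the converse I would start from an indecomposable solution $(Y,s)$ with permutation brace $B$. By \cite[Theorem 3.1]{BCJ2016solutions} it arises from some family $(x_i,K_i)_{i\in I}$, with $Y=\bigsqcup_{i\in I}(B/K_i)$, and in that construction each summand $B/K_i$ is a sub-solution of $(Y,s)$, so the family exhibits $(Y,s)$ as a disjoint union of solutions indexed by $I$. Were $|I|\geq 2$ this union would be nontrivial, contradicting indecomposability; hence $|I|=1$. Writing $(x,K)$ for $(x_{\ast},K_{\ast})$, the three pieces of construction data become verbatim the three hypotheses of the statement --- $\{\lambda_b(x):b\in B\}$ generates $(B,+)$, $\lambda_k(x)=x$ for $k\in K$, and $K$ is core-free --- so $(Y,s)$ has the asserted form.

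The step I expect to be the main obstacle is the one point above that must be verified rather than merely quoted: that the disjoint-union decomposition $Y=\bigsqcup_i(B/K_i)$ coming from \cite{BCJ2016solutions} really is a decomposition of $(Y,s)$ into \emph{sub-solutions} --- equivalently, that each coset space $B/K_i$ is invariant under all the maps $\sigma_y$ --- since this is exactly what lets transitivity of the permutation action force $|I|=1$. The remaining translations ($\mathrm{core}_{(B,\circ)}(K)=1$ versus $K$ core-free, one orbit versus one element of the family) are immediate, and once the sub-solution compatibility is in place both directions follow from the $|I|=1$ case of \cite[Theorem 3.1]{BCJ2016solutions} with essentially no further computation.
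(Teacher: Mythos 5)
Your derivation is correct, and it follows the same route the paper takes: the paper gives no independent proof of this statement but presents it as the specialization of \cite[Theorem 3.1]{BCJ2016solutions} (and the isomorphism criterion of Theorem 4.1 there) to the case of a single orbit, which is exactly what you carry out. You also correctly isolate the one point that needs checking rather than quoting --- that each component $B/K_i$ of the general construction is invariant under the permutation action, so transitivity forces the index set to be a singleton --- and your handling of the core-free condition (the intersection of cores collapsing to a single core, and faithfulness of the coset action) matches the discussion preceding the theorem in the paper.
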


In order to classify all solutions with a given permutation group, we must then
understand all possible braces with that multiplicative structure. For this we
refer to recent work classifying braces, and more general skew-braces, of fixed
sizes \cite{acri2020skew,acri2022abelian,dietzel2021braces,alabdali2021skew}.
We are going to be refering mainly to the results of Acri and Bonato in \cite{acri2020skew}
and \cite{acri2022abelian}, as they give explicit formulas for the structure of
the all the braces we require.

The process of classification is then the following. Using the explicit description
of the braces with fixed multiplicative group we first characterize all generting
orbits of the $\lambda$ action. In the case of abelian groups this gives us all
solutions, for the dihedral groups we also need to find the core-free
subgroups of the stabilizers to find all solutions. Next we need to find how the
brace automorphisms act on the orbits. For this we again use the explicit description
to compute the group of automorphisms explicitly. We the explicit description of
the orbits and automorphisms we can see which orbits generate isomorphic solutions.

\section{Braces of size $pq$}\label{sec:size-pq}

We first focus on solutions with permutation group of size $pq$. For the descriptions
of the braces we refer to \cite{acri2020skew}, where all (skew) braces of size $pq$ are
classified. According to the classification there are at most two braces of these
sizes. There is always a trivial brace with cyclic multiplicative group. When
$p\equiv 1\pmod{q}$ there is an additional brace whose multiplicative group is
a semidirect product.

We first consider the trivial brace. In this case, we give all
indecomposable solutions for any trivial brace,
\begin{proposition}\label{prop:trivial-braces}
Let $B$ be a trivial brace. If the underlying group of $B$ is cyclic then there
is a single indecomposable solution with associated brace $B$. If the underlying
group is not cyclic then there is no indecomposable solution.
\end{proposition}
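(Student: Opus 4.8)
The plan is to feed the structure of a trivial brace into the restated Bachiller--Cedó--Jespers theorem, where almost everything collapses. First I would compute the $\lambda$-action: if $B$ is trivial then $a\circ b=a+b$, hence $\lambda_a(b)=-a+a\circ b=b$ for all $a,b\in B$, so every $\lambda_a=\id$. In particular, for any $x\in B$ the orbit $\{\lambda_b(x):b\in B\}$ is just $\{x\}$, so the hypothesis of the theorem that this orbit generates $(B,+)$ is equivalent to $x$ being a generator of the group $(B,+)$.

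The non-cyclic case is then immediate: if $(B,+)$ is not cyclic there is no element $x$ generating it, hence no admissible choice in the theorem, and since the theorem accounts for \emph{all} indecomposable solutions with associated brace $B$, there is none.

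For the cyclic case I would treat existence and uniqueness in turn. For existence, fix a generator $x$ of $(B,+)$; the condition $\lambda_k(x)=x$ on the core-free subgroup $K<(B,\circ)$ is then vacuous, and since $(B,\circ)=(B,+)$ is abelian the only core-free subgroup is the trivial one (as already noted for abelian groups above), so the pair consisting of $x$ together with the trivial subgroup yields an indecomposable solution with associated brace $B$. For uniqueness, I would invoke the isomorphism criterion of \cite{BCJ2016solutions} recalled above: two solutions arising from generators $x$ and $x'$ (both with trivial subgroup, which are automatically conjugate) are isomorphic as soon as some brace automorphism of $B$ carries $\{x\}$ to $\{x'\}$. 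Since $B$ is trivial, its brace automorphisms are exactly the group automorphisms of $(B,+)$, and $\Aut(B,+)$ acts transitively on the set of generators of a cyclic group; hence all such solutions coincide up to isomorphism, and there is a single one.

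I do not anticipate a real obstacle: the proof is a direct unwinding of the definitions through the theorem. The only things to watch are using the $\lambda$-orbit (not the multiplicative orbit) in the generating condition and invoking the abelian-group reduction so that the subgroup $K$ never plays a role; the uniqueness then rests only on the classical fact that $\Aut$ of a cyclic group permutes its generators transitively.
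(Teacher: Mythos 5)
Your proof is correct and follows essentially the same route as the paper: compute that $\lambda$ is trivial so orbits are singletons, reduce the generating condition to $x$ generating $(B,+)$, note the only core-free subgroup is trivial, and use transitivity of $\Aut(B,+)$ on generators of a cyclic group for uniqueness. The only difference is that you spell out the computation $\lambda_a=\id$ explicitly, which the paper leaves implicit.
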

\begin{proof}
By \cite{BCJ2016solutions} we first need to find all orbits under the $\lambda$
action that generate the additive group. Since the brace is trivial all orbits
are singletons, and the problem reduces to finding generators. In particular only
cyclic groups will produce any indecomposable solutions. We can then safely
assume the group is cyclic to find all such solutions.

Since the group is cyclic then only possible core-free subgroup is the trivial
group, so every generator of the group produces a single solution. However, since
the group is cyclic there is a group automorphism mapping every generator to any
other one.
Since the brace is trivial any additive morphism is a brace morphism, and since the subgroup
being used to construct the solution is trivial this gives an isomorphism
between any pair of the constructed solutions.
\end{proof}

Next we consider the non-trivial brace of size $pq$. For this we must have
\(p\equiv 1 \pmod{q}\). We identify the brace $B$ with its additive group
$\Z_{p}\times\Z_{q}$. The multiplicative structure is then given by
\begin{align}\label{eqn:mult-pq}
\begin{pmatrix} a \\ b \end{pmatrix}\circ\begin{pmatrix} c \\ d \end{pmatrix}
= \begin{pmatrix} a + g^{b} c \\ b + d \end{pmatrix},
\end{align}
where $g$ is any element of $\Z_{p}$ of order $q$.

By Remark~\ref{rmk:orbit-order}, any generating orbit must contain a generator, i.e. an
element of the form $(a,b)$ with both $a$ and $b$ generators of the corresponding
group. The stabilizer of any such orbit is then $\Z_{p}\times\{0\}$, giving
$\frac{(p-1)(q-1)}{q}$ distinct orbits. We note that this subgroup has no
non-trivial core-free subgroups.

To understand when two of these orbits generate the same solution we
need to know the group of brace automorphisms, these are given by the following
proposition:

\begin{proposition}\label{prop:auts-pq}
Let $p$ and $q$ be primes with \( p \equiv 1 \pmod{q}\). Then the group of
automorphisms of the non-trivial brace of size $pq$ is isomorphic to $\Z_p^{\times}$.
\end{proposition}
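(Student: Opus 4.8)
The plan is to compute $\Aut(B,+,\circ)$ directly from the explicit multiplication formula \eqref{eqn:mult-pq}. Since $(B,+) = \Z_p \times \Z_q$, any brace automorphism is in particular an automorphism of the additive group, hence lies in $\Aut(\Z_p)\times\Aut(\Z_q)\cong \Z_p^\times\times\Z_q^\times$. So I would write a candidate automorphism $\varphi$ as $\varphi(a,b) = (\alpha a, \beta b)$ with $\alpha\in\Z_p^\times$, $\beta\in\Z_q^\times$, and impose that $\varphi$ also respects $\circ$, i.e. $\varphi\big((a,b)\circ(c,d)\big) = \varphi(a,b)\circ\varphi(c,d)$ for all $a,c\in\Z_p$, $b,d\in\Z_q$.

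Next I would expand both sides using \eqref{eqn:mult-pq}. The left-hand side is $\varphi(a+g^b c,\ b+d) = (\alpha a + \alpha g^b c,\ \beta b + \beta d)$, while the right-hand side is $(\alpha a, \beta b)\circ(\alpha c,\beta d) = (\alpha a + g^{\beta b}\alpha c,\ \beta b + \beta d)$. The second coordinates always agree, so the condition reduces to $\alpha g^b c = g^{\beta b}\alpha c$ for all $b\in\Z_q$ and $c\in\Z_p$; cancelling $\alpha c$ (valid since we may take $c$ a unit), this is $g^b = g^{\beta b}$ in $\Z_p^\times$ for all $b$, equivalently $g^{(\beta-1)b}=1$. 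Since $g$ has order exactly $q$, this forces $(\beta - 1)b \equiv 0 \pmod q$ for all $b\in\Z_q$, hence $\beta \equiv 1 \pmod q$. Conversely, any $\varphi(a,b)=(\alpha a, b)$ with $\alpha\in\Z_p^\times$ is readily checked to be a brace automorphism. Therefore $\Aut(B,+,\circ) = \{(a,b)\mapsto(\alpha a,b) : \alpha\in\Z_p^\times\}\cong\Z_p^\times$.

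I do not expect a serious obstacle here: the argument is a direct substitution followed by the order bookkeeping for $g$. The only point requiring a small amount of care is the justification that additive-group automorphisms of $\Z_p\times\Z_q$ are exactly the "diagonal" maps $(a,b)\mapsto(\alpha a,\beta b)$ — this uses that $\gcd(p,q)=1$, so there are no cross terms, and $\Aut(\Z_p\times\Z_q)\cong\Z_p^\times\times\Z_q^\times$ canonically. With that in hand the rest is the routine cancellation computation sketched above.
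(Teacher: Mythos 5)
Your proposal is correct and follows essentially the same route as the paper: write the additive automorphism as coordinatewise multiplication by $(\alpha,\beta)\in\Z_p^\times\times\Z_q^\times$, substitute into \eqref{eqn:mult-pq}, and deduce $\beta=1$ from the first coordinate. Your write-up is in fact a bit more careful than the paper's, since you explicitly use that $g$ has order $q$ to force $\beta\equiv 1\pmod q$ and you verify the converse.
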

\begin{proof}
The automorphisms of the additive group are given by $\Z_p^{\times}\times\Z_q^{\times}$,
acting by coordinatewise multiplication. Given $(\alpha,\beta)$ in this group
for it to be a brace automorphism it must satisfy
\[
\begin{pmatrix}\alpha (a + g^b c)\\ \beta (b + d)\end{pmatrix}
= \begin{pmatrix} \alpha (a + g^{\beta b} c) \\ \beta (b + d) \end{pmatrix},
\qquad \forall a,c\in\Z_p, b,d\in\Z_q.
\]
And for that to hold the only possibility is \(\beta = 1\).
\end{proof}
From this it follows that two orbits will generate the same solution precisely
when the second coordinate of its elements coincide. This means that there are
exactly $q-1$ indecomposable solutions with this brace.

We then have the following result classifying all indecomposable solutions whose
permutation group has size $pq$:
\begin{theorem}\label{thm:clasif-pq}
Let $G$ be a group of order $pq$. Then:
\begin{enumerate}
  \item If $G$ is cyclic there is a single indecomposable solution with
  permutation group $G$.
  \item If $G$ is a semidirect product $\Z_p \rtimes \Z_q$ then there are
  exactly $q-1$ indecomposable solutions with permutation group $G$.
\end{enumerate}
In any of these cases all solutions have size $pq$.
\end{theorem}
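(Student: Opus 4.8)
The plan is to assemble Theorem~\ref{thm:clasif-pq} directly from the pieces developed earlier in this section, treating the two cases according to the isomorphism type of $G$. The underlying principle is that, by the simplified Bachiller--Ced\'o--Jespers correspondence, classifying indecomposable solutions with permutation group $G$ is the same as, first, enumerating all braces whose multiplicative group is $G$, and then, for each such brace, counting the generating $\lambda$-orbits up to the action of the brace automorphism group (together with a choice of core-free subgroup of the stabilizer, which in both cases here turns out to be forced to be trivial). So the first step is to invoke the classification of braces of size $pq$ from \cite{acri2020skew}: there is the trivial brace on the cyclic group $\Z_{pq}$, and, when $p\equiv 1\pmod q$, exactly one additional brace, the non-trivial one with multiplicative group $\Z_p\rtimes\Z_q$ described by~\eqref{eqn:mult-pq}. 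Crucially, one must also note that the trivial brace on the \emph{non}-cyclic group $\Z_p\rtimes\Z_q$ does exist as a brace but, by Proposition~\ref{prop:trivial-braces}, yields no indecomposable solution, so it does not contribute.

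For case (1), $G$ cyclic: the only brace with multiplicative group $\Z_{pq}$ is the trivial one (a group of order $pq$ with $p\ne q$ has a unique brace structure when cyclic, namely the trivial one, or one cites the $pq$-classification). Proposition~\ref{prop:trivial-braces} then gives exactly one indecomposable solution, and its size is the index of the trivial subgroup, i.e. $|G|=pq$. For case (2), $G=\Z_p\rtimes\Z_q$ (which requires $p\equiv 1\pmod q$): there are two braces with this multiplicative group, the trivial one and the non-trivial one of~\eqref{eqn:mult-pq}. The trivial one contributes nothing by Proposition~\ref{prop:trivial-braces}. For the non-trivial brace, Remark~\ref{rmk:orbit-order} forces any generating orbit to contain an element $(a,b)$ with $a$ a generator of $\Z_p$ and $b$ a generator of $\Z_q$; the stabilizer of such an element under $\lambda$ is $\Z_p\times\{0\}$, which has no non-trivial core-free subgroup, so every such orbit gives exactly one solution. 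Counting these orbits: there are $(p-1)(q-1)$ such generators, each orbit under the $\lambda$-action (which multiplies the first coordinate by powers of $g$, an element of order $q$) has size $q$, giving $(p-1)(q-1)/q$ orbits. Then Proposition~\ref{prop:auts-pq} says the automorphism group is $\Z_p^\times$, acting only on the first coordinate; hence two orbits are identified precisely when their elements share the same second coordinate, leaving $q-1$ equivalence classes, hence $q-1$ indecomposable solutions, each of size $pq/1=pq$.

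The size claim in all cases follows because the solution set is $G/K$ with $K$ the chosen core-free subgroup of the stabilizer, and in every case above $K$ is trivial, so $|X|=|G|=pq$. I would present the proof as a short case analysis that simply cites Proposition~\ref{prop:trivial-braces}, Proposition~\ref{prop:auts-pq}, Remark~\ref{rmk:orbit-order}, the brace classification of \cite{acri2020skew}, and the counting already carried out in the paragraphs preceding the theorem statement.

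The main obstacle I anticipate is not in any single computation but in the bookkeeping of \emph{which} braces must be considered for each group: one must be careful that the trivial brace on the non-abelian group $\Z_p\rtimes\Z_q$ is correctly dismissed (it is a legitimate brace but produces no indecomposable solution), and that when $p\not\equiv 1\pmod q$ the non-trivial brace simply does not exist, so case (2) is vacuous unless the congruence holds. Making sure the statement of the theorem is read with this implicit hypothesis on case (2), and that no brace with multiplicative group $G$ has been overlooked, is the only delicate point; the counting itself is routine once Propositions~\ref{prop:trivial-braces} and~\ref{prop:auts-pq} are in hand.
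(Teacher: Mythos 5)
Your proposal is correct and follows essentially the same route as the paper, which itself offers no separate proof of this theorem but assembles it from Proposition~\ref{prop:trivial-braces}, the orbit count for the non-trivial brace, and Proposition~\ref{prop:auts-pq}; your orbit/automorphism bookkeeping and the size argument via the triviality of $K$ all match. One auxiliary claim is wrong, though harmless: there is \emph{no} trivial brace on the non-abelian group $\Z_p\rtimes\Z_q$, since by Definition~\ref{def:brace} the additive group of a brace is abelian and a trivial brace has coinciding structures, so that case is dismissed because the object does not exist, not by appeal to Proposition~\ref{prop:trivial-braces} (which concerns trivial braces on non-cyclic \emph{abelian} groups such as $\Z_p\times\Z_p$). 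The final counts are unaffected.
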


A particular instance of this result is that if $p$ is a prime number then there
is a single indecomposable solution with permutation group the dihedral group
$D_{2p}$.

\section{Cyclic braces of size $p^{2}q$}

Now we turn to study braces of size $p^{2}q$ and start by considering those
whose multplicative group is cyclic. We refer to \cite{acri2022abelian} for
the enumaration and explicit descriptions of the braces.

\subsection{Case $p=2$}\label{sec:4q-cyc}

In \cite{acri2022abelian} the analysis is split into the cases \(q\equiv 1\pmod{4}\)
and \(q\equiv 3\pmod{4}\), however the braces with abelian permutation group
can be described uniformly for both cases so we make no such distinction here.

There are two possible braces with cyclic permutation group. The first is the
trivial brace. By Theorem~\ref{prop:trivial-braces} there is a single indecomposable
solution for this brace. For the other solution we identify the brace with its
additive group \(\Z_2^2\times\Z_q\). The multiplicative structure is
given by
\begin{align}\label{eqn:mult-cyc-4q}
\begin{pmatrix} a \\ b \\ c \end{pmatrix}\circ\begin{pmatrix} d \\ e \\ f \end{pmatrix}
= \begin{pmatrix} a + d + be \\ b + e \\ c + f \end{pmatrix}.
\end{align}

By Remark~\ref{rmk:orbit-order} the elements of any generating orbit must have
order $2q$. So we must consider the orbits of elements of the form
$(\alpha,\beta,\gamma)$ with $\gamma$ a generator of $\Z_q$ and
$\alpha$ and $\beta$ not both 0. Since acting via $\lambda$ on one of this elemnts
can only change its first coordinate we must have $\beta=1$. We then have
exactly $q-1$ generating orbits of the form
\(
\mathcal{O}_\gamma = \{ (*, 1, \gamma) \}
\), with $\gamma$ a generator of $\Z_q$.

The following result characterizes the group of brace automorphisms of these
braces:

\begin{proposition}\label{prop:auts-cyc-4q}
Given $q$ a prime number, the group of brace automorphisms
of the only non-trivial brace with multiplicative group isomorphic to
$\Z_{4q}$ is isomorphic to $\Z_2\times\Z_q^\times$, with
\end{proposition}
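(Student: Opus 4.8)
The plan is to mimic the proof of Proposition~\ref{prop:auts-pq}: first determine the automorphism group of the additive group $\Z_2^2\times\Z_q$, and then cut it down by imposing compatibility with the multiplication \eqref{eqn:mult-cyc-4q}. Because $2$ and $q$ are coprime, the additive group is the direct product of its Sylow subgroups, so every additive automorphism $\phi$ acts on the first two coordinates by some $M\in\GL_2(\F_2)$ and on the third coordinate by multiplication by some $\mu\in\Z_q^{\times}$; write $M=\begin{pmatrix} p & s\\ r & t\end{pmatrix}$ over $\F_2$. This identification $\Aut(\Z_2^2\times\Z_q)\cong\GL_2(\F_2)\times\Z_q^{\times}$ is the one place where coprimality of $2$ and $q$ is used.

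Next I would write the condition $\phi(x\circ y)=\phi(x)\circ\phi(y)$ out coordinate by coordinate for $x=(a,b,c)$, $y=(d,e,f)$. The third coordinate of both sides is $\mu(c+f)$, so it gives no information. Equating second coordinates yields $r(a+d+be)+t(b+e)=(ra+tb)+(rd+te)$, i.e.\ $rbe=0$ in $\Z_2$ for all $b,e$; taking $b=e=1$ forces $r=0$, after which $\det M\neq 0$ over $\F_2$ forces $p=t=1$. Thus $M=\begin{pmatrix} 1 & s\\ 0 & 1\end{pmatrix}$ with $s\in\{0,1\}$, so in particular $\phi$ fixes the middle coordinate. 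Substituting $r=0$, $p=t=1$ into the first coordinate, a short computation shows both sides equal $a+d+be+s(b+e)$, so the first coordinate imposes no further condition.

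Hence the brace automorphisms are exactly the maps $(a,b,c)\mapsto(a+sb,\,b,\,\mu c)$ with $s\in\Z_2$ and $\mu\in\Z_q^{\times}$. The unipotent matrices $\begin{pmatrix} 1 & s\\ 0 & 1\end{pmatrix}$ form a subgroup of $\GL_2(\F_2)$ isomorphic to $\Z_2$, and this factor commutes with the independent copy of $\Z_q^{\times}$ acting on the last coordinate, so the automorphism group is isomorphic to $\Z_2\times\Z_q^{\times}$; the description above also supplies the explicit action the statement refers to.

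I do not expect a genuine obstacle. The only step warranting care is the first-coordinate check, where one might fear a coupling between $s$ and $\mu$: the point is that the quadratic term $be$ in \eqref{eqn:mult-cyc-4q} involves only the middle additive coordinate, which $\phi$ fixes once $M$ is known to be unitriangular, so no extra relation appears.
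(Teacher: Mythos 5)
Your proof is correct and follows essentially the same route as the paper's: identify the additive automorphisms with $\GL_2(\F_2)\times\Z_q^\times$, use the second coordinate of the compatibility condition to force the lower-left matrix entry to vanish, invoke invertibility to make the matrix unitriangular, and check that the first and third coordinates impose nothing further. Your explicit verification that the first coordinate is automatically satisfied is slightly more detailed than the paper's one-line assertion, but the argument is the same.
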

\begin{proof}
The group automorphisms of the additive group are isomorphic to
$\GL_2(2)\times\Z_q^\times$, with the first component acting by multiplication
on the first two coordinates, and the second component on the third one. Let
\((A,u)\) be an element of this group, with
\( A = {\tiny \begin{pmatrix} x & y \\ w & z \end{pmatrix} }\). We replace
each term by $(A,u)$ times the element in \eqref{eqn:mult-noCyc-p2q} to check
which ones are also multiplicative morphisms. Focusing on the second coordinate
we get
\[
w(a+d)+z(b+e) = w(a+d+be)+z(b+e),
\]
if this is to hold for any $a,b,d,e\in\Z_2$ then $w$ must be zero. This immediately
implies we must have $x=1=z$ for the matrix to be invertible. The condition on
the first coordinate is now automatically satisfied and from the third
coordinate we do not get any restriction, so for $(A,u)$ to be a brace automorphism
$A$ must be unitriangular. Since the group of unitriangular matrices is isomorphic
to $\Z_2$ we get the desired result.
\end{proof}

Given two generating orbits $\mathcal{O}_\gamma$ and $\mathcal{O}_{\gamma'}$ we
can take $A$ the identity matrix and $u=\gamma^{-1}\gamma'$ to get by the previous
result a brace automorphism mapping the first orbit to the second one.
In particular all the orbits generate the same solution, i.e. there is a single
indecomposable solution with non-trivial brace in this case.

\subsection{Case $p$ odd}

In this case we again have only two possible braces, the trivial one and a single
non-trivial one. However, in this case the non-trivial brace also has cyclic additive
group. Identifying the brace with its additive group \(\Z_{p^2}\times\Z_q\), the
multiplicative structure is given by
\begin{align}\label{eqn:mult-cyc-p2q}
\begin{pmatrix} a \\ b \end{pmatrix} \circ \begin{pmatrix} c \\ d \end{pmatrix}
= \begin{pmatrix} a + c + p a c \\ b + d \end{pmatrix}.
\end{align}
By Remark~\ref{rmk:orbit-order} any generating orbit is the orbit of a generator
of the additive group. Given $(\alpha,\beta)$ a generator of $\Z_{p^2}\times\Z_q$
its orbit under the $\lambda$ action consists of all elements of the form
$(\hat{\alpha},\beta)$ with $\hat{\alpha}\equiv\alpha\pmod{p}$.

To see which of this orbits generate the same solution we characterize the group
of brace automorphisms of this brace.

\begin{proposition}\label{prop:auts-cyc-p2q}
The group of brace automorphisms of the only non-trivial brace of order $p^2q$
with cyclic multiplicative group is isomorphic to $\Z_{q}^\times$.
\end{proposition}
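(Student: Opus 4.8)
The plan is to mirror the two-step method used in Propositions~\ref{prop:auts-pq} and \ref{prop:auts-cyc-4q}: first describe the automorphisms of the additive group, then cut down to those additive automorphisms that also respect the multiplicative law \eqref{eqn:mult-cyc-p2q}.

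First I would record the additive automorphisms. The additive group $\Z_{p^2}\times\Z_q$ is cyclic of order $p^2q$ with $p\neq q$, so every additive automorphism preserves the two primary components and is coordinatewise multiplication by a pair $(\alpha,\beta)\in\Z_{p^2}^\times\times\Z_q^\times$; write $\phi(a,b)=(\alpha a,\beta b)$. It then remains to determine for which $(\alpha,\beta)$ this $\phi$ is also multiplicative.

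Next I would substitute into \eqref{eqn:mult-cyc-p2q}. Applying $\phi$ to $(a,b)\circ(c,d)$ gives first coordinate $\alpha(a+c+pac)$, whereas $\phi(a,b)\circ\phi(c,d)$ has first coordinate $\alpha a+\alpha c+p\alpha^2 ac$. The second coordinates agree for every $\beta$, so $\beta$ is unconstrained and contributes a free factor $\Z_q^\times$. Equating first coordinates leaves the single relation $p\alpha(1-\alpha)ac\equiv 0\pmod{p^2}$ for all $a,c$, which (taking $a=c=1$ and using that $\alpha$ is a unit) is equivalent to $\alpha\equiv 1\pmod p$. Hence the brace automorphisms are precisely the pairs $(\alpha,\beta)$ with $\alpha$ in the kernel of the reduction $\Z_{p^2}^\times\to\Z_p^\times$ and $\beta$ arbitrary.

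The decisive point --- and what I expect to be the main obstacle --- is the exact strength of this surviving condition on $\alpha$. The relation $\alpha\equiv 1\pmod p$ does \emph{not} force $\alpha=1$: the reduction kernel $\{1+pk:k\in\Z_p\}$ is a nontrivial cyclic group of order $p$ (already for $p=3$ the unit $\alpha=4$ yields a genuine brace automorphism), so the computation actually produces the full automorphism group $\Z_p\times\Z_q^\times$, in exact analogy with the $p=2$ case of Proposition~\ref{prop:auts-cyc-4q}, where the surviving unitriangular matrices form a copy of $\Z_2\cong\Z_p$. I would therefore either restate the conclusion as $\Z_p\times\Z_q^\times$, or, if $\Z_q^\times$ is the intended group, prove instead the weaker statement that the \emph{image} of the automorphism group in its action on the generating orbits is $\Z_q^\times$: multiplication by $1+pk$ preserves the residue modulo $p$ of the first coordinate that labels each orbit, so the $\Z_p$ factor acts trivially on orbits, while $\Z_q^\times$ acts faithfully and transitively on the $\beta$-label. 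Verifying these last two facts is the only routine step that remains.
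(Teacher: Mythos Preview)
Your approach is exactly the one the paper uses: write every additive automorphism as coordinatewise multiplication by $(\alpha,\beta)\in\Z_{p^2}^\times\times\Z_q^\times$ and impose compatibility with \eqref{eqn:mult-cyc-p2q}. Your computation is also the more careful one, and your conclusion is correct while the paper's is not.

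The paper's proof asserts that compatibility forces $\alpha^2\equiv\alpha\pmod{p^2}$, hence $\alpha=1$. But, as you compute, substituting $\phi(a,b)=(\alpha a,\beta b)$ into \eqref{eqn:mult-cyc-p2q} gives only $p\alpha ac\equiv p\alpha^2 ac\pmod{p^2}$, i.e.\ $\alpha(\alpha-1)\equiv 0\pmod{p}$, which for a unit $\alpha$ means precisely $\alpha\equiv 1\pmod{p}$. The kernel of reduction $\Z_{p^2}^\times\to\Z_p^\times$ has order $p$, so the brace automorphism group is $\Z_p\times\Z_q^\times$, not $\Z_q^\times$. Your explicit check with $p=3$, $\alpha=4$ already witnesses a nontrivial automorphism the paper misses.

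Your proposed repair is also the right one for the paper's purposes. The orbits under $\lambda$ are already the fibres of $(a,b)\mapsto(a\bmod p,\,b)$, so the extra $\Z_p$-factor (multiplication by $1+pk$) acts trivially on the set of orbits, while $\Z_q^\times$ acts simply transitively on the $\beta$-coordinate. Hence the count of $p-1$ indecomposable solutions in Theorem~\ref{thm:calsif-p2q-cyc} survives unchanged; only the statement of Proposition~\ref{prop:auts-cyc-p2q} needs to be corrected to $\Z_p\times\Z_q^\times$.
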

\begin{proof}
The group of automorphisms of the additive group is $\Z_{p^2}^\times\times\Z_q^\times$,
acting by coordinatewise multiplication. From \eqref{eqn:mult-cyc-p2q} we can
see that for an element $(\alpha,\beta)$ to also be a morphism of the
multiplicative group it must satisfy \(\alpha^2\equiv\alpha\pmod{p^2}\).
\end{proof}

From this result we then conclude that two generators of the additive group
$(\alpha,\beta)$ and $(\alpha',\beta')$ give the same solution if
$\alpha\equiv\alpha'\pmod{p}$. In particular, there are $p-1$ indecomposable
solutions for this brace.

Together with the previous case we have proved the following theorem:
\begin{theorem}\label{thm:calsif-p2q-cyc}
Let $p$ and $q$ be distinct prime numbers. There are $p$ distinct indecomposable
solutions with permutation group isomorphic to $\Z_{p^2q}$.
\end{theorem}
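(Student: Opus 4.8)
The plan is to assemble the theorem from the two cases treated in Subsection~\ref{sec:4q-cyc} and in the subsection that follows it. By the Bachiller--Cedó--Jespers scheme, an indecomposable solution with permutation group $\Z_{p^2q}$ corresponds, up to isomorphism, to a brace $B$ whose multiplicative group is $\Z_{p^2q}$ together with a generating orbit of the $\lambda$-action on $B$; the extra choice of a core-free subgroup of a stabilizer is vacuous here, since $(B,\circ)\cong\Z_{p^2q}$ is abelian, so every subgroup equals its own normal core and the only core-free subgroup is the trivial one. Two such data give isomorphic solutions exactly when some brace automorphism carries the first orbit onto the second; in particular, solutions whose associated braces are non-isomorphic are themselves non-isomorphic, so I may count solutions brace by brace and simply add.

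Next I would invoke the classification of braces of order $p^2q$ with cyclic multiplicative group from \cite{acri2022abelian}: in both the case $p=2$ and the case $p$ odd there are exactly two such braces, namely the trivial brace and one non-trivial brace. For the trivial brace, Proposition~\ref{prop:trivial-braces} yields a single indecomposable solution, since $\Z_{p^2q}$ is cyclic. For the non-trivial brace I would quote the two subsections above: the analysis of \eqref{eqn:mult-cyc-4q} via Proposition~\ref{prop:auts-cyc-4q} gives a single indecomposable solution when $p=2$, while the analysis of \eqref{eqn:mult-cyc-p2q} via Proposition~\ref{prop:auts-cyc-p2q} gives $p-1$ of them when $p$ is odd.

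Adding the contributions, in the case $p=2$ one obtains $1+1=2=p$ indecomposable solutions, and in the case $p$ odd one obtains $1+(p-1)=p$ of them, with no overlap between the two families by the remark on non-isomorphic braces. Since everything of substance --- identifying all generating orbits and counting their orbits under the group of brace automorphisms --- has already been carried out in the preceding subsections, the only content of the proof of the theorem itself is this bookkeeping, and I do not anticipate any obstacle beyond it; the one point worth stating carefully is the completeness input, namely that \cite{acri2022abelian} exhausts the braces with multiplicative group $\Z_{p^2q}$.
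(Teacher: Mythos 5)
Your proposal is correct and follows exactly the paper's route: the theorem is stated there with the remark ``together with the previous case we have proved the following theorem,'' i.e.\ the proof is precisely the bookkeeping you describe, adding the single solution from the trivial brace to the one solution (case $p=2$, via Proposition~\ref{prop:auts-cyc-4q}) or the $p-1$ solutions (case $p$ odd, via Proposition~\ref{prop:auts-cyc-p2q}) from the unique non-trivial brace with cyclic multiplicative group. Your explicit remarks on why non-isomorphic braces yield non-isomorphic solutions and on the completeness of the classification in \cite{acri2022abelian} are sound and, if anything, make the implicit argument of the paper more careful.
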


\section{Non-Cyclic abelian braces of size $p^2q$}

We now focus on the only non-cyclic abelian group of order $p^2q$. We note that
by Proposition~\ref{prop:trivial-braces} the trivial brace will not give us a
solution in this case.

\subsection{Case $p = 2$}

In this case there is a single non-trivial brace and its additive group is cyclic.
Identifying the brace with its additive group $\Z_q\times\Z_4$ it multiplicative
structure is given by
\begin{align}\label{eqn:mult-noCyc-4q}
\begin{pmatrix} a \\ b \end{pmatrix} \circ \begin{pmatrix} c \\ d \end{pmatrix}
=\begin{pmatrix} a + c \\ b + (-1)^{b} d \end{pmatrix}.
\end{align}
The generating orbits are then of the form
\(\mathcal{O}_\alpha = \{(\alpha,1),(\alpha,3)\} \), with $\alpha$ a generator of
$\Z_q$.

We can characterize the group of brace automorphisms with the following result:
\begin{proposition}
Given $q$ an odd prime, the only non-trivial brace with mutliplicative group
$\Z_q\times\Z_2^2$ has group of automorphisms isomorphic to
$\Z_{4q}^\times$.
\end{proposition}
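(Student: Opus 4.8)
The plan is to follow the pattern of the preceding propositions: first identify the automorphism group of the additive group, and then determine which of these additive automorphisms are also homomorphisms for $\circ$.

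Since $\gcd(q,4)=1$, the additive group $\Z_q\times\Z_4$ is cyclic of order $4q$, and by the Chinese Remainder Theorem $\Aut(\Z_q\times\Z_4)\cong\Z_q^\times\times\Z_4^\times\cong\Z_{4q}^\times$; a pair $(\alpha,u)$ with $\alpha\in\Z_q^\times$ and $u\in\Z_4^\times$ acts by coordinatewise multiplication, $(a,b)\mapsto(\alpha a,ub)$. It therefore suffices to show that \emph{every} such additive automorphism is automatically a brace automorphism, which will identify the group of brace automorphisms with $\Z_{4q}^\times$.

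To check this, fix $(\alpha,u)$ and write $\phi$ for the corresponding additive automorphism. Using \eqref{eqn:mult-noCyc-4q}, the first coordinate of both $\phi(x\circ y)$ and $\phi(x)\circ\phi(y)$ equals $\alpha(a+c)$, so there is nothing to verify there. On the second coordinate, $\phi(x\circ y)$ has second coordinate $u\bigl(b+(-1)^b d\bigr)=ub+u(-1)^b d$, whereas $\phi(x)\circ\phi(y)$ has second coordinate $ub+(-1)^{ub}(ud)$. The key observation is that $u\in\{1,3\}$ is odd, so $ub\equiv b\pmod 2$ and hence $(-1)^{ub}=(-1)^b$; the two expressions therefore coincide for all $b,d\in\Z_4$. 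Thus $(\alpha,u)$ is always a brace automorphism.

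There is essentially no obstacle here: the only subtle point is to notice that multiplying the exponent $b$ by the odd unit $u$ does not change its parity, so the sign $(-1)^{ub}$ collapses to $(-1)^b$ and the multiplicativity condition becomes vacuous. This is in contrast with the earlier cases, where the analogous condition genuinely forced a proper subgroup of the additive automorphism group; here instead all of $\Aut(\Z_q\times\Z_4)\cong\Z_{4q}^\times$ survives.
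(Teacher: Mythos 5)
Your proposal is correct and follows exactly the paper's approach: identify the additive automorphism group as $\Z_q^\times\times\Z_4^\times\cong\Z_{4q}^\times$ acting by coordinatewise multiplication, then verify by substitution into \eqref{eqn:mult-noCyc-4q} that every such map is also multiplicative. You merely spell out the substitution that the paper leaves implicit, correctly isolating the key point that $(-1)^{ub}=(-1)^b$ because $u$ is odd.
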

\begin{proof}
The gorup of automorphisms of the additive group is
\(\Z_{4q}^\times\cong\Z_q^\times\times\Z_4^\times\), which acts by coordinatewise
multiplication. By substituting on \eqref{eqn:mult-noCyc-4q} we see that all of
these are also multiplicative automorphisms.
\end{proof}

It follows from this result that all orbits generate the same solution. There is
then a single indecomposable solution with permutation group $\Z_q\times\Z_2^2$
for any odd prime $q$.

\subsection{Case $p$ odd}

Like in the previous case there is a single non-trivial brace, however in this
case the multiplicative group is isomorphic to $\Z_p^2\times\Z_q$. Identifying
the brace with the additive group, the multiplicative structure is given by
\begin{align}\label{eqn:mult-noCyc-p2q}
\begin{pmatrix} a \\ b \\ c \end{pmatrix} \circ \begin{pmatrix} d \\ e \\ f \end{pmatrix}
=\begin{pmatrix} a + d + be \\ b + e \\ c + f \end{pmatrix}.
\end{align}

Notice that this is the same formula that defines the non-trivial brace of
subsection~\ref{sec:4q-cyc}. However when $p$ is an odd prime the resulting
multiplicative group is not cyclic. The orbit of an element $(\alpha,0,\beta)$
only contains elements of the same form, and in particular cannot generate the
additive group. The rest of the orbits are of the form
\(
\mathcal{O}_{\alpha,\beta} = \{ (*,\alpha,\beta) \}
\), with \(\alpha \neq 0\).
For such an orbit to generate both $\beta$ must also be non-zero. The
following proposition characterizes the brace automorphisms:
\begin{proposition}
Given $p$ an odd prime and $q$ a prime number, the group of brace automorphisms
of the only non-trivial brace with multiplicative group isomorphic to
$\Z_p^2\times\Z_q$ is isomorphic to $G\times\Z_q^\times$, with
\[
G = \left\{ \begin{pmatrix} x^2 & y \\ 0 & x \end{pmatrix}
            : x \in \Z_p^\times, y \in \Z_p \right\} \subset \GL_2(p).
\]
\end{proposition}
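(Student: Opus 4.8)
The plan is to mirror the computation carried out in the proof of Proposition~\ref{prop:auts-cyc-4q}, since the multiplicative structure in \eqref{eqn:mult-noCyc-p2q} is formally identical to that of \eqref{eqn:mult-cyc-4q}; only the base ring has changed from $\Z_2$ to $\Z_p$ in the first two coordinates. First I would record that the additive group is $\Z_p^2 \times \Z_q$, so its automorphism group is $\GL_2(p) \times \Z_q^\times$, where an element $A = \left(\begin{smallmatrix} x & y \\ w & z \end{smallmatrix}\right) \in \GL_2(p)$ acts on the first two coordinates and $u \in \Z_q^\times$ acts on the third. The third coordinate of \eqref{eqn:mult-noCyc-p2q} is just addition in $\Z_q$, so $u$ is unconstrained and contributes the $\Z_q^\times$ factor directly; the whole problem reduces to determining which $A \in \GL_2(p)$ are compatible with the first two coordinates.

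Next I would impose the morphism condition. Applying $A$ to both sides of the rule for the first two coordinates and comparing, the second coordinate gives $w(a+d) + z(b+e) = w(a+d+be) + z(b+e)$ for all $a,b,d,e \in \Z_p$, which forces $w = 0$ (the term $wbe$ must vanish identically). With $w = 0$, invertibility of $A$ means $x, z \in \Z_p^\times$. The first coordinate then requires $x(a + d + be) = x a + x d + (xb+yb\cdot\text{?})\dots$ — more carefully, the left side is $x(a+d+be)$ and the right side, after applying the multiplication rule to the transformed inputs $(xa+yb, zb, \ast)$ and $(xd+ye, ze, \ast)$, is $(xa+yb) + (xd+ye) + (zb)(ze)$. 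Equating and cancelling the common linear terms leaves $x\,be = z^2\,be + y e$ for all $b,e$; matching the coefficient of $be$ forces $x = z^2$, and matching the remaining coefficient of $e$ (the terms linear in $e$ alone) forces $y$ to be free — one checks $y$ survives unconstrained once $x=z^2$. Writing $z = x$ (renaming), the admissible matrices are exactly $\left(\begin{smallmatrix} x^2 & y \\ 0 & x \end{smallmatrix}\right)$ with $x \in \Z_p^\times$, $y \in \Z_p$, which is the claimed set $G$.

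Finally I would verify that $G$ is indeed a subgroup of $\GL_2(p)$ (closure under multiplication: $\left(\begin{smallmatrix} x^2 & y \\ 0 & x \end{smallmatrix}\right)\left(\begin{smallmatrix} x'^2 & y' \\ 0 & x' \end{smallmatrix}\right) = \left(\begin{smallmatrix} (xx')^2 & x^2 y' + y x' \\ 0 & xx' \end{smallmatrix}\right)$ has the right shape, and inverses are computed similarly), and that every such $A$, together with every $u \in \Z_q^\times$, genuinely yields a brace automorphism — this is the converse direction and amounts to substituting back into \eqref{eqn:mult-noCyc-p2q} and checking both sides agree, which the coefficient matching above already guarantees. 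The group of brace automorphisms is then $G \times \Z_q^\times$ as stated. I expect no serious obstacle here: the only place that needs care is the bookkeeping in the first-coordinate equation to confirm that $y$ is unconstrained while $x = z^2$ is forced, rather than the other way around; this is exactly the point where the odd-$p$ case diverges from the $p=2$ case (where $x = z^2$ degenerates to $x = z = 1$ and $G$ collapses to the unitriangular $\Z_2$).
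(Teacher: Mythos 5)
Your proposal is correct and follows essentially the same route as the paper: reduce to $\GL_2(p)\times\Z_q^\times$, use the second coordinate to force $w=0$, and use the first coordinate to force $x=z^2$ while leaving $y$ free. One caution: your displayed intermediate equation $x\,be = z^2\,be + ye$ comes from dropping the $y(b+e)$ term on the left-hand side (the full left side is $x(a+d+be)+y(b+e)$), and taken literally it would force $y=0$; once that term is restored the $ye$ terms cancel and the comparison reduces cleanly to $xbe=z^2be$, confirming your stated conclusion that $y$ is unconstrained.
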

\begin{proof}
We procede like in Proposition~\ref{prop:auts-cyc-4q}.
The group automorphisms of the additive group are isomorphic to
$\GL_2(p)\times\Z_q^\times$, with the first component acting by multiplication
on the first two coordinates, and the second component on the third one. Let
\((A,u)\) be an element of this group, with
\( A = {\tiny \begin{pmatrix} x & y \\ w & z \end{pmatrix} }\). We act on
each term of \eqref{eqn:mult-noCyc-p2q} by $(A,u)$ to check
which ones are also multiplicative morphisms. Focusing on the second coordinate
we get
\[
w(a+d)+z(b+e) = w(a+d+be)+z(b+e),
\]
and like before for this to hold for any $a,b,d,e\in\Z_p$ $w$ must be zero. In
this case the condition on the first coordinate is not automatically satisfied.
Using $w=0$ we get
\[
x(a+d)+y(b+e)+z^2be = x(a+d)+y(b+e)+xbe,
\]
that will hold for any $a,b,d,e\in\Z_p$ if and only if $x=z^2$. From the third
coordinate we do not get any restriction, so for $(A,u)$ to be a brace automorphism
it must be of the desired form.
\end{proof}

Given two generating orbits $\mathcal{O}_{\alpha,\beta}$ and
$\mathcal{O}_{\alpha',\beta'}$ we can take \(u = \beta^{-1}\beta' \) and
\(
A={\tiny \begin{pmatrix} (\alpha^{-1}\alpha')^2 & 0 \\ 0 &\alpha^{-1}\alpha' \end{pmatrix}}
\) to get an
automorphism that maps the first orbit to second one. This means that all the
orbits generate the same solution.

With this and the previous case we have proved the following theorem:
\begin{theorem}
Given $p$ and $q$ prime numbers, there is a single indecomposable solution with
permutation group $\Z_p^2\times\Z_q$.
\end{theorem}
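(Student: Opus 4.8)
The plan is to assemble the two subsections of this section into one statement. First I would invoke Proposition~\ref{prop:trivial-braces} to discard the trivial brace: the group $\Z_p^2\times\Z_q$ is not cyclic for any pair of primes, so the trivial brace on it yields no indecomposable solution. Consequently every indecomposable solution with permutation group $\Z_p^2\times\Z_q$ has a non-trivial brace with that multiplicative group as its structure brace, and by the classification of braces of order $p^2q$ in \cite{acri2022abelian} there is exactly one such brace for each $p$: the one with additive group $\Z_q\times\Z_4$ and multiplication \eqref{eqn:mult-noCyc-4q} when $p=2$, and the one with additive group $\Z_p^2\times\Z_q$ and multiplication \eqref{eqn:mult-noCyc-p2q} when $p$ is odd.

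For this unique brace I would apply the Bachiller-Cedó-Jespers scheme \cite{BCJ2016solutions}. Since the multiplicative group $\Z_p^2\times\Z_q$ is abelian, the only core-free subgroup of any stabilizer is the trivial one, so the number of indecomposable solutions equals the number of generating $\lambda$-orbits modulo the action of the brace automorphism group. In the two cases these generating orbits are, respectively, $\mathcal{O}_\alpha=\{(\alpha,1),(\alpha,3)\}$ for $\alpha$ a generator of $\Z_q$, and $\mathcal{O}_{\alpha,\beta}=\{(*,\alpha,\beta)\}$ with $\alpha$ and $\beta$ both nonzero; in particular at least one generating orbit always exists, so at least one solution occurs.

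It remains to check that all these orbits lie in a single automorphism orbit. For this I would use the two propositions of this section describing $\Aut(B)$: when $p=2$ every coordinatewise multiplication in $\Z_{4q}^\times\cong\Z_q^\times\times\Z_4^\times$ is a brace automorphism, and multiplying the first coordinate by $\alpha^{-1}\alpha'$ sends $\mathcal{O}_\alpha$ to $\mathcal{O}_{\alpha'}$; when $p$ is odd the brace automorphism $(A,u)$ with $u=\beta^{-1}\beta'$ and $A=\operatorname{diag}\bigl((\alpha^{-1}\alpha')^2,\alpha^{-1}\alpha'\bigr)$ sends $\mathcal{O}_{\alpha,\beta}$ to $\mathcal{O}_{\alpha',\beta'}$. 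Hence in each case all generating orbits produce the same indecomposable solution, and combining the two cases gives the theorem. I do not anticipate a genuine obstacle: all the work is already contained in the two subsections, and the only points that need attention are the explicit removal of the trivial brace and the observation that abelianness of the multiplicative group makes all subgroup bookkeeping vanish.
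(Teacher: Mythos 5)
Your proposal is correct and follows the paper's argument essentially verbatim: discard the trivial brace via Proposition~\ref{prop:trivial-braces} since the group is non-cyclic, identify the unique non-trivial brace in each of the cases $p=2$ and $p$ odd, note that abelianness forces the trivial core-free subgroup, and exhibit the same explicit brace automorphisms (coordinatewise multiplication for $p=2$, and $(A,u)$ with $A=\operatorname{diag}\bigl((\alpha^{-1}\alpha')^2,\alpha^{-1}\alpha'\bigr)$ for $p$ odd) to show all generating orbits yield a single solution. No discrepancies with the paper's treatment.
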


\section{Dihedral braces of size $p^2q$}

In this last section we turn our attention to solutions with dihedral permutation
group. With the results of \cite{acri2022abelian} we can get all solutions with
permutation group $D_{2p^2}$ or $D_{4p}$ for $p$ an odd prime. We first note that
in any dihedral group every rotation generates a normal subgroup, and any two
distinct reflections generate some rotation. In particular, in any dihedral group
the only core-free subgroups are those generated by a single reflection, and so
we have the following result:

\begin{theorem}\label{prop:size-dih-sols}
If $X$ is an indecomposable solution with permutation group $D_{2n}$, then $|X|=2n$
or $|X|=n$.
\end{theorem}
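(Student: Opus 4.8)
The plan is to use the classification scheme of Bachiller--Cedó--Jespers described in Section~2, which tells us that an indecomposable solution with permutation (equivalently, structure) brace $B$ is obtained by choosing a generating $\lambda$-orbit of some $x\in B$ together with a core-free subgroup $K$ of the multiplicative group contained in the stabilizer of $x$, and then taking $X = B/K$ as the underlying set. Thus $|X| = [B:K]$, and the entire statement reduces to identifying which subgroups $K$ of a dihedral group $D_{2n}$ can possibly occur as such a $K$, i.e.\ which subgroups of $D_{2n}$ are core-free.

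First I would recall the structure of subgroups of $D_{2n} = \langle \rho, s : \rho^n = s^2 = 1,\ s\rho s = \rho^{-1}\rangle$: every subgroup is either contained in the cyclic rotation subgroup $\langle\rho\rangle$, or is itself dihedral of the form $\langle \rho^d, \rho^i s\rangle$ for some $d \mid n$. I would then argue that the normal core of any such subgroup is nontrivial unless the subgroup is generated by a single reflection. For the first family: any nontrivial subgroup of $\langle\rho\rangle$ is a characteristic (hence normal) subgroup of $D_{2n}$, so it equals its own core; thus only the trivial subgroup in this family is core-free. For the dihedral subgroups $\langle\rho^d, \rho^i s\rangle$: if $d < n$ then the subgroup contains the nontrivial normal rotation subgroup $\langle\rho^d\rangle$, so its core is nontrivial; hence the only core-free candidates have $d = n$, i.e.\ are of the form $\langle \rho^i s\rangle$, a group of order $2$ generated by a single reflection. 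One still must check that such a subgroup $\langle \rho^i s\rangle$ is genuinely core-free: its core is the intersection of all conjugates $\langle \rho^{i+2k}s\rangle$, and since two distinct reflections multiply to a nontrivial rotation, if $n>1$ the only element common to two such conjugates is the identity. (When $n = 1$ the group is $\Z_2$ itself, the statement $|X| = 2n$ or $n$ is vacuously covered.)

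With the classification of core-free subgroups in hand, the conclusion is immediate: the only possibilities for $K$ are the trivial subgroup, giving $|X| = |B/K| = 2n$, or a subgroup of order $2$ generated by a reflection, giving $|X| = |B/K| = n$. No hypotheses on $B$ beyond its multiplicative group being $D_{2n}$ are needed, so the statement holds in the stated generality.

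The main obstacle is not any deep computation but rather making the core-freeness argument airtight in the two borderline dihedral subgroups of maximal rotation part — namely those $\langle\rho^d,\rho^i s\rangle$ with $d = n/2$ (so that $\langle\rho^{n/2}\rangle$ could be trivial if $n$ were small) and the subgroups of order $2$ generated by a reflection when $n$ is even, where distinct reflections can lie in the same conjugacy class in several ways. I would handle this by treating uniformly the observation already flagged in the text: any two distinct reflections generate a subgroup containing a nontrivial rotation, so a normal subgroup consisting only of reflections and the identity must have at most one reflection, forcing it to be trivial; combined with the fact that a normal subgroup contained in $\langle\rho\rangle$ and contained in our $K$ must be trivial when $K \cap \langle\rho\rangle$ is trivial, this covers all cases cleanly.
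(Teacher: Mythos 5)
Your proposal is correct and follows essentially the same route as the paper: the paper's justification is precisely the observation that every subgroup of the rotation subgroup is normal in $D_{2n}$ and any two distinct reflections generate a nontrivial rotation, so the only core-free subgroups are the trivial one and those generated by a single reflection, whence $|X|=[B:K]\in\{2n,n\}$. Your write-up just carries out this case analysis in more detail (and correctly flags the small-$n$ borderline cases, which do not affect the disjunctive conclusion).
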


\subsection{Case $2p^2$}
For this case we will analyze a slightly more general one, that of a semidirect
product $\Z_{p^2}\rtimes\Z_q$ when \(p\equiv 1\pmod{q}\). Taking $q=2$ we get
the desired dihedral group. These semidirect products are given by choosing
$g$ an element of order $q$ in $\Z_{p^2}^\times$, and letting a generator of
$\Z_q$ act by mutliplication by $g$.

There is a single brace
with this group as its multpiplicative group, and it has additive group isomorphic
to the cyclic group. Its multiplicative structure is given by
\[
\begin{pmatrix} a \\ b \end{pmatrix} \circ \begin{pmatrix} c \\ d \end{pmatrix}
= \begin{pmatrix} a + g^b c \\ b + d \end{pmatrix}.
\]

We note that this formula is precisely the same as \eqref{eqn:mult-pq}, and with
the same arguments as in Section~\ref{sec:size-pq} we get that the automorphism
group of this braces is given by the following result:
\begin{proposition}\label{prop:auts-pq}
Let $p$ and $q$ be primes with \( p \equiv 1 \pmod{q}\). Then the group of
automorphisms of the non-trivial brace of size $p^2q$ is isomorphic to
$\Z_{p^2}^{\times}$.
\end{proposition}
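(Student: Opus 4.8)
The plan is to repeat the computation from the $pq$ case almost verbatim, since the formula for $\circ$ here is literally the same one as in \eqref{eqn:mult-pq} with $p$ replaced by $p^{2}$. The one structural fact I would invoke at the outset is that $p^{2}$ and $q$ are coprime, so every automorphism of the additive group $\Z_{p^{2}}\times\Z_{q}$ preserves each Sylow subgroup and is therefore coordinatewise multiplication by a unique pair $(\alpha,\beta)\in\Z_{p^{2}}^{\times}\times\Z_{q}^{\times}$; that is, $\Aut(\Z_{p^{2}}\times\Z_{q})\cong\Z_{p^{2}}^{\times}\times\Z_{q}^{\times}$. Since a brace automorphism is in particular an automorphism of the additive group, it is enough to decide for which pairs $(\alpha,\beta)$ this map also respects $\circ$.

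For the second step I would impose compatibility with the multiplicative law. Applying $(\alpha,\beta)$ to $\binom{a}{b}\circ\binom{c}{d}=\binom{a+g^{b}c}{b+d}$ and comparing with the $\circ$-product of the images $\binom{\alpha a}{\beta b}$ and $\binom{\alpha c}{\beta d}$, the second coordinates agree automatically, while the first coordinates force $\alpha(a+g^{b}c)=\alpha a+g^{\beta b}\alpha c$ for all $a,c\in\Z_{p^{2}}$ and $b,d\in\Z_{q}$, i.e.\ $g^{b}=g^{\beta b}$ for every $b\in\Z_{q}$. Because $g$ has order exactly $q$ in $\Z_{p^{2}}^{\times}$, this is equivalent to $\beta\equiv 1\pmod q$, hence $\beta=1$; and conversely every pair $(\alpha,1)$ clearly satisfies the identity. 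Thus the group of brace automorphisms is $\{(\alpha,1):\alpha\in\Z_{p^{2}}^{\times}\}\cong\Z_{p^{2}}^{\times}$, as claimed. (In the dihedral case $q=2$ the group $\Z_{q}^{\times}$ is already trivial, so the conclusion needs no computation at all.)

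I do not expect a genuine obstacle here: the only line that deserves explicit justification is the identification of $\Aut(\Z_{p^{2}}\times\Z_{q})$ with the coordinatewise action, which is standard from the coprimality of the orders of the two factors, and everything else reduces to the same one-line check already carried out in Section~\ref{sec:size-pq}. If desired one could phrase the whole proof as: the presentation of the brace and the additive automorphism group are formally identical to those appearing in the $pq$ case after substituting $p^{2}$ for $p$, so the argument there applies mutatis mutandis.
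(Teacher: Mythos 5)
Your proof is correct and is essentially the paper's argument: the paper itself just remarks that the formula is identical to the $pq$ case and invokes the same computation, which is exactly what you carry out (coordinatewise identification of $\Aut(\Z_{p^2}\times\Z_q)$ via coprimality, then forcing $\beta=1$ from $g^{b}=g^{\beta b}$ since $g$ has order $q$). Your explicit justification of the additive automorphism group and the remark about $q=2$ are welcome additions but not a different route.
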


Like before the generating orbits are the ones generated by elements $(\alpha,\beta)$
with $\alpha$ and $\beta$ generators, and two elements give the same solution
precisely when the have the same second coordinate.
\begin{theorem}
Given $p$ and $q$ primes such that $p\equiv 1 \pmod{q}$, there are exactly
$q-1$ indecomposable solutions with permutation group $\Z_{p^2}\times\Z_{q}$,
and all such solutions have size $p^2q$.
\end{theorem}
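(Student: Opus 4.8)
The plan is to run the Bachiller--Cedó--Jespers scheme exactly as in Section~\ref{sec:size-pq}; the only genuinely new point is that the relevant stabilizer is now cyclic of order $p^{2}$ rather than of prime order. By \cite{acri2022abelian} there is a single brace $B$ whose multiplicative group is the semidirect product $\Z_{p^{2}}\rtimes\Z_{q}$ considered in this subsection, namely the one with cyclic additive group $\Z_{p^{2}}\times\Z_{q}$ and the $\circ$ displayed above. So it suffices to count, up to isomorphism, the indecomposable solutions with structure brace $B$, and by the Bachiller--Cedó--Jespers theorem these correspond to pairs $(x,K)$ with $x\in B$ whose $\lambda$-orbit additively generates $B$ and $K$ a core-free subgroup of $(B,\circ)$ fixing $x$, considered up to brace automorphism of $B$ (and conjugacy of the subgroups).

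First I would compute the $\lambda$-action. From $\lambda_{(a,b)}(c,d)=-(a,b)+(a,b)\circ(c,d)$ and the formula for $\circ$ one gets $\lambda_{(a,b)}(c,d)=(g^{b}c,d)$, so the $\lambda$-orbit of $(c,d)$ is $\{(g^{j}c,d):j\in\Z\}$, with the second coordinate frozen. By Remark~\ref{rmk:orbit-order} a generating orbit must consist of additive generators of $B$, hence is the orbit $\mathcal{O}_{\alpha,\beta}$ of some $(\alpha,\beta)$ with $\alpha\in\Z_{p^{2}}^{\times}$ and $\beta$ a generator of $\Z_{q}$; conversely every such orbit generates, and each has exactly $q$ elements.

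Next I would pin down the admissible subgroups $K$. The stabilizer of $(\alpha,\beta)$ under $\lambda$ is $\{(a,b):g^{b}\alpha=\alpha\}=\Z_{p^{2}}\times\{0\}$, a cyclic subgroup of $(B,\circ)$ of order $p^{2}$. Here a short conjugation computation gives $(c,d)\circ(a,0)\circ(c,d)^{-1}=(g^{d}a,0)$, so every one of the three subgroups of $\Z_{p^{2}}\times\{0\}$ is normal in $(B,\circ)$; in particular the order-$p$ subgroup is not core-free, and the only core-free subgroup is the trivial one. Hence $K$ must be trivial, the conjugacy condition in the Bachiller--Cedó--Jespers theorem is vacuous, every such solution has size $|B|=p^{2}q$, and the isomorphism classes of indecomposable solutions with brace $B$ are in bijection with the generating $\lambda$-orbits modulo the action of $\Aut(B)$.

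Finally, by Proposition~\ref{prop:auts-pq} the brace automorphisms act by $(x,y)\mapsto(\gamma x,y)$ with $\gamma\in\Z_{p^{2}}^{\times}$, so such an automorphism carries $\mathcal{O}_{\alpha,\beta}$ to $\mathcal{O}_{\gamma\alpha,\beta}$. Since $\gamma$ ranges over all of $\Z_{p^{2}}^{\times}$ and $\alpha$ is a unit, the product $\gamma\alpha$ can be made to equal any unit, so two generating orbits lie in the same $\Aut(B)$-orbit precisely when their second coordinates agree. There are $q-1$ admissible second coordinates, giving exactly $q-1$ indecomposable solutions, all of size $p^{2}q$. The only step that needs genuine care --- rather than the one-line appeal to primality available in Section~\ref{sec:size-pq} --- is the normality argument that rules out the order-$p$ subgroup of the stabilizer; the rest is the same routine substitution used there.
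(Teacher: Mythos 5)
Your proof is correct and follows essentially the same route as the paper, which simply transports the arguments of Section~\ref{sec:size-pq} to the brace with multiplicative group $\Z_{p^2}\rtimes\Z_q$: same $\lambda$-orbits $\{(g^j\alpha,\beta)\}$, same stabilizer $\Z_{p^2}\times\{0\}$, and the automorphism group $\Z_{p^2}^\times$ acting transitively on the admissible first coordinates, leaving $q-1$ classes indexed by $\beta$. The one place you go beyond the paper --- explicitly checking via the conjugation formula $(c,d)\circ(a,0)\circ(c,d)^{-1}=(g^d a,0)$ that the order-$p$ subgroup of the stabilizer is normal, hence not core-free --- is a genuine gap-filling detail worth keeping, since unlike the $pq$ case the stabilizer here has a nontrivial proper subgroup.
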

In particular we get the following consequence:
\begin{corollary}
Given $p$ an odd prime, there is a single indecomposable solution with permutation
group the dihedral group $D_{2p^2}$. This solution has size $2p^2$.
\end{corollary}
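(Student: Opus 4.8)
The plan is to obtain the statement as the special case $q = 2$ of the preceding theorem. First I would check the hypothesis: for an odd prime $p$ the congruence $p \equiv 1 \pmod{q}$ with $q = 2$ reads $p \equiv 1 \pmod 2$, which holds automatically. Next I would identify the group. Since $-1$ has order $2$ in $\Z_{p^2}^\times$, the semidirect product $\Z_{p^2} \rtimes \Z_2$ in which a generator of $\Z_2$ acts on $\Z_{p^2}$ by multiplication by $-1$ — that is, by inversion — is precisely the dihedral group $D_{2p^2}$. Hence $D_{2p^2}$ is an instance of the family of groups $\Z_{p^2} \rtimes \Z_q$ analyzed above, with $q = 2$ and $g = -1$.

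Given this identification the conclusion follows immediately: the preceding theorem produces exactly $q - 1 = 1$ indecomposable solution with permutation group $D_{2p^2}$, and asserts that every such solution has size $p^2 q = 2p^2$.

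Because the proof is a pure specialization, I do not expect a genuine obstacle; the only points that deserve a word of justification are already handled upstream. That $\Z_{p^2} \rtimes \Z_2$ carries a unique brace structure — so that the analysis above captures all indecomposable solutions with permutation group $D_{2p^2}$ — is the content of the classification of braces of order $p^2q$ recalled in this section following \cite{acri2022abelian}. And the alternative size $p^2$ allowed a priori by Theorem~\ref{prop:size-dih-sols} is ruled out: it is already excluded by the preceding theorem's assertion that all such solutions have size $p^2q$, and conceptually because the $\lambda$-stabilizer of a generating element $(\alpha,\beta)$ is $\Z_{p^2} \times \{0\}$, a group of rotations containing no reflection, so the core-free subgroup $K$ in the construction of \cite{BCJ2016solutions} is forced to be trivial and $|X| = |D_{2p^2}| = 2p^2$.
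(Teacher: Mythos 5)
Your proposal is correct and follows exactly the route the paper intends: the corollary is the specialization $q=2$ of the preceding theorem, with $D_{2p^2}\cong\Z_{p^2}\rtimes\Z_2$ realized via the inversion action ($g=-1$ being the unique element of order $2$ in $\Z_{p^2}^\times$). Your added remark that the stabilizer of a generating element consists only of rotations, so the core-free subgroup must be trivial and the size is forced to be $2p^2$, is a welcome explicit justification of a point the paper handles only implicitly.
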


\subsection{Case $4p$}

In this case we have several brace structures to study. There are two braces
with cyclic additive group and another one with non-cyclic additive group. The
first cyclic one has multiplicative structure given by
\begin{align}\label{eqn:mult-dih-1}
\begin{pmatrix} a \\ b \end{pmatrix} \circ \begin{pmatrix} c \\ d \end{pmatrix}
= \begin{pmatrix} a + (-1)^b c \\ b + (-1)^b d \end{pmatrix}.
\end{align}
The orbit of a generator $(\alpha,\beta)$ consists of itself and the element
$(-\alpha,-\beta)$, and its stabilizer is $\{(*,0),(*,2)\}$, which is precisely
the group of all rotations, and so has no non-trivial core-free subgroups. The
group of brace automorphisms is given by the following result:
\begin{proposition}
Given $p$ an odd prime, the group of brace automorphisms of the brace with cyclic
additive group and multiplicative group given by \eqref{eqn:mult-dih-1} is
isomorphic to $\Z_{4p}^\times$.
\end{proposition}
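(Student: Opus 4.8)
The plan is to proceed exactly as in the earlier automorphism computations, in particular as in Proposition~\ref{prop:auts-cyc-4q}: compute the automorphism group of the additive group, substitute a general additive automorphism into the multiplicative law \eqref{eqn:mult-dih-1}, and read off which of them are compatible with $\circ$.

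First I would fix coordinates. The additive group is cyclic of order $4p$ with $p$ odd, hence isomorphic to $\Z_p\times\Z_4$, and I will write brace elements as pairs $(a,b)$ with $a\in\Z_p$ and $b\in\Z_4$, matching \eqref{eqn:mult-dih-1}, so that addition is coordinatewise. Since $\gcd(p,4)=1$, the automorphism group of the additive group is $\Aut(\Z_p\times\Z_4)\cong\Z_p^\times\times\Z_4^\times\cong\Z_{4p}^\times$, acting by coordinatewise multiplication; thus a general additive automorphism has the form $\phi_{\alpha,\beta}(a,b)=(\alpha a,\beta b)$ with $\alpha\in\Z_p^\times$ and $\beta\in\Z_4^\times=\{1,3\}$.

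Next I would check that each $\phi_{\alpha,\beta}$ is automatically a brace automorphism. Applying $\phi_{\alpha,\beta}$ to both sides of \eqref{eqn:mult-dih-1}, the left-hand side produces the exponent $(-1)^b$ and the right-hand side produces $(-1)^{\beta b}$, while every other term simply scales by $\alpha$ or $\beta$ and matches formally. The one point to record — and the only one — is that $\beta\in\Z_4^\times$ forces $\beta$ to be odd, so $\beta b\equiv b\pmod 2$ and hence $(-1)^{\beta b}=(-1)^b$. With this, both coordinates of the two sides agree, so $\phi_{\alpha,\beta}$ respects $\circ$. Since conversely any brace automorphism is in particular an additive automorphism, the group of brace automorphisms equals $\Aut(\Z_p\times\Z_4)\cong\Z_{4p}^\times$, which is the assertion.

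I do not expect a genuine obstacle here: in contrast to the non-abelian multiplicative groups treated in Proposition~\ref{prop:auts-cyc-4q} and in the $\Z_p^2\times\Z_q$ case, no extra condition is forced on the parameters, precisely because the units modulo $4$ act as $\pm 1$ on the second coordinate and this is exactly what makes the identity $(-1)^{\beta b}=(-1)^b$ hold. The only thing to be careful about is to state $\Z_4^\times=\{1,3\}$ explicitly and to note that the exponent appearing in \eqref{eqn:mult-dih-1} depends only on $b\bmod 2$, which is what lets the substitution go through with no restrictions.
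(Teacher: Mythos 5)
Your proof is correct and follows the same route as the paper: the paper simply asserts that every additive automorphism (multiplication by a pair $(x,y)$) is a brace automorphism, and you supply the verification, correctly identifying the key point that $\beta\in\Z_4^\times$ is odd so $(-1)^{\beta b}=(-1)^b$. No issues.
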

\begin{proof}
An automorphism of the additive group is given by multiplication by a pair of
elements $(x,y)$, and one can verify that all of them are brace automorphisms.
\end{proof}
In particular this means that all the orbits give rise to the same solution.

The second brace with cyclic additive group is given by the formula
\begin{align}\label{eqn:mult-dih-2}
\begin{pmatrix} a \\ b \end{pmatrix} \circ \begin{pmatrix} c \\ d \end{pmatrix}
=\begin{pmatrix} a + (-1)^{b(b-1)/2} c \\ b + (-1)^b d \end{pmatrix}.
\end{align}
In this case the orbit of a generator has four elements, and its stabilizer is
given by the subgroup $\{(*,0)\}$. This group consists entirely of rotations and
so has no non-trivial core-free subgroups. We characterize the group of brace
automorphsims with the following resul:
\begin{proposition}
Given $p$ an odd prime, the group of brace automorphisms of the brace with cyclic
additive group and multiplicative group given by \eqref{eqn:mult-dih-2} is
isomorphic to $\Z_{p}^\times$.
\end{proposition}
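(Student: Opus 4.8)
The plan is to follow the template of Proposition~\ref{prop:auts-cyc-4q} and of the analogous statement for $\Z_q\times\Z_2^2$. First I would identify the additive group with $\Z_p\times\Z_4$ (legitimate since $\gcd(4,p)=1$), so that $\Aut(\Z_{4p},+)\cong\Z_p^\times\times\Z_4^\times$ acts by coordinatewise multiplication; a typical additive automorphism is then $\phi_{x,y}\colon (a,b)\mapsto(xa,yb)$ with $x\in\Z_p^\times$ and $y\in\Z_4^\times=\{1,3\}$. Then I would impose that $\phi_{x,y}$ commute with the operation \eqref{eqn:mult-dih-2} and read off the constraints coordinatewise.

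The second coordinate yields $(-1)^b\,yd=(-1)^{yb}\,yd$ for all $b,d$; since $y$ is odd, $yb\equiv b\pmod 2$ and this imposes nothing. All the content is in the first coordinate, which demands $(-1)^{b(b-1)/2}=(-1)^{(yb)(yb-1)/2}$ for every $b\in\Z_4$. For $y=1$ this is automatic, so every $\phi_{x,1}$ is a brace automorphism. For $y=3$ one has $yb=-b$ in $\Z_4$, and since $(-b)(-b-1)/2=b(b-1)/2+b$ the two exponents differ by $b$ mod $2$; the identity therefore fails exactly when $b$ is odd, so $y=3$ is ruled out. (Equivalently one checks on the representatives $b\in\{0,1,2,3\}$ that $(-1)^{b(b-1)/2}$ runs through $1,1,-1,-1$ while $(-1)^{(3b)(3b-1)/2}$ runs through $1,-1,-1,1$.) Hence the brace automorphism group is $\{\phi_{x,1}:x\in\Z_p^\times\}\cong\Z_p^\times$.

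The one point that needs care — and what makes this brace genuinely different from the one in \eqref{eqn:mult-dih-1}, where every additive automorphism survives — is the ``triangular'' exponent $(-1)^{b(b-1)/2}$: one should note at the outset that $b(b-1)/2 \bmod 2$ really is a well-defined function of $b\in\Z_4$ (this is exactly what makes \eqref{eqn:mult-dih-2} an operation in the first place), and it is precisely the failure of this function to be invariant under $b\mapsto -b$ that destroys the $\Z_4^\times$ factor. Beyond this the argument is a short finite verification with no serious obstacle.
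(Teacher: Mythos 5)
Your argument is correct and is essentially the paper's own proof: both take a coordinatewise additive automorphism $(a,b)\mapsto(xa,yb)$, substitute into \eqref{eqn:mult-dih-2}, and deduce that the second coordinate imposes nothing while the exponent $(-1)^{b(b-1)/2}$ in the first coordinate forces $y=1$, leaving $\Z_p^\times$. Your version simply spells out the finite check (the values $1,1,-1,-1$ versus $1,-1,-1,1$) and the well-definedness of $b(b-1)/2\bmod 2$, which the paper leaves implicit.
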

\begin{proof}
An automorphism of the additive group is given by multiplication by a pair of
elements $(x,y)$. Acting with this in both sides of the formula \eqref{eqn:mult-dih-2},
we get that to be a brace automorphism the pair must satisfy $y=1$, which gives us the desired
result.
\end{proof}
With this we can see that all the orbits generate the same solution.

Finally we look at the non-cyclic brace. Identifying it with its additive group
$\Z_2^2\times\Z_p$ its multiplicative structure is given by
\begin{align}\label{eqn:mult-dih-3}
\begin{pmatrix} a \\ b \\ c \end{pmatrix}\circ\begin{pmatrix} d \\ e \\ f \end{pmatrix}
=\begin{pmatrix} a + d  \\ b + e \\ c + (-1)^a f \end{pmatrix}.
\end{align}

The lambda action on an element does not modify the first two coordinates. Concretely
this means the orbit of an element $(\alpha,\beta,\gamma)$ is
$\{(\alpha,\beta,\pm\gamma)\}$, and in particualar it cannot
generate the additive group. So this brace has no associated solutions.

With this result we can see that all orbits are conjugate under isomorphism and
so generate the same solution. In conclusion we get the following result:

\begin{theorem}
Given $p$ an odd prime, there are two indecomposable solutions with permutation
group isomorphic to $D_{4p}$. Moreover all of these solutions have size $4p$.
\end{theorem}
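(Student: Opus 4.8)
The plan is to assemble the brace-by-brace analysis of the three braces with multiplicative group $D_{4p}$ carried out above. Recall that by the simplified Bachiller--Cedó--Jespers scheme an indecomposable solution with permutation group $D_{4p}$ is the same datum as a brace $B$ whose multiplicative group is isomorphic to $D_{4p}$, a $\lambda$-orbit $\mathcal{O}$ that additively generates $B$, and a core-free subgroup $K$ of the stabilizer of an element of $\mathcal{O}$; the solution then has underlying set $B/K$, and two such data give isomorphic solutions exactly when some brace automorphism carries one orbit onto the other while conjugating the chosen subgroups. Since $D_{4p}$ is not cyclic, Proposition~\ref{prop:trivial-braces} rules out the trivial brace, so by the classification of \cite{acri2022abelian} the braces to examine are precisely the three non-trivial ones, with multiplicative structures \eqref{eqn:mult-dih-1}, \eqref{eqn:mult-dih-2} and \eqref{eqn:mult-dih-3}.

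I would then treat these three in turn, reusing the orbit and automorphism computations already established. For \eqref{eqn:mult-dih-3} the $\lambda$-action fixes the first two coordinates of $\Z_2^2\times\Z_p$, so no orbit generates $(B,+)$ and this brace contributes no solution. For \eqref{eqn:mult-dih-1} and \eqref{eqn:mult-dih-2} a generating orbit is the orbit of a generator of the additive group; its stabilizer is the full rotation subgroup in the first case and the order-$p$ subgroup of rotations in the second, so in both cases it contains no reflection, and hence (since in a dihedral group the only core-free subgroups are those generated by a single reflection) the only admissible $K$ is trivial. Consequently each generating orbit gives a solution with underlying set $B/K=B$, of size $4p$. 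The automorphism computations then give $\Aut(B)\cong\Z_{4p}^\times$ for \eqref{eqn:mult-dih-1} and $\Aut(B)\cong\Z_p^\times$ for \eqref{eqn:mult-dih-2}, each acting by coordinatewise multiplication and, as noted above, transitively on the generating orbits; so each of these two braces yields exactly one indecomposable solution.

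Finally I would observe that the two solutions obtained have non-isomorphic braces (those of \eqref{eqn:mult-dih-1} and \eqref{eqn:mult-dih-2} are already distinguished by $|\Z_{4p}^\times|=2(p-1)\ne p-1=|\Z_p^\times|$), and hence, since isomorphic solutions carry isomorphic braces, they are themselves non-isomorphic. This yields exactly two indecomposable solutions with permutation group $D_{4p}$, both of size $4p$, in agreement with Theorem~\ref{prop:size-dih-sols}. The step I expect to require the most care is the core-free subgroup analysis: one must check that the relevant stabilizers for \eqref{eqn:mult-dih-1} and \eqref{eqn:mult-dih-2} really do consist entirely of rotations, so that no solution of size $2p$ is overlooked, and that for \eqref{eqn:mult-dih-3} the orbits genuinely fail to generate; the rest is a direct collation of the preceding propositions.
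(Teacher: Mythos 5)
Your proposal is correct and follows essentially the same route as the paper, which proves this theorem by collating the preceding brace-by-brace analysis: the brace \eqref{eqn:mult-dih-3} contributes nothing since its $\lambda$-orbits cannot generate, while each of \eqref{eqn:mult-dih-1} and \eqref{eqn:mult-dih-2} has all-rotation stabilizers (hence only the trivial core-free subgroup, giving size $4p$) and an automorphism group acting transitively on generating orbits, yielding one solution apiece. Your added remark that the two solutions are distinguished by their non-isomorphic braces is a useful explicit justification of a point the paper leaves implicit.
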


As we noted in Proposition~\ref{prop:size-dih-sols} an indecomposable solution
with permutation group $D_{2n}$ can only have size $n$ or $2n$. From the results
obtained here we conjecture the first case never happens:
\begin{conjecture}
If $X$ is an indecomposable solution with permutation group $D_{2n}$ then
$|X|=2n$.
\end{conjecture}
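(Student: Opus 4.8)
The conjecture asserts that no indecomposable solution with dihedral permutation group $D_{2n}$ can have the ``small'' size $n$. By Proposition~\ref{prop:size-dih-sols} (whose proof is recalled just above), the only way to obtain a solution of size $n$ rather than $2n$ is to use, in the Bachiller--Cedó--Jespers construction, a core-free subgroup $K$ generated by a single reflection. So the plan is to rule out exactly this possibility: one must show that whenever $B$ is a brace whose multiplicative group is $D_{2n}$, $x\in B$ is an element whose $\lambda$-orbit additively generates $B$, and $t$ is a reflection, then $t$ cannot lie in the stabilizer of $x$, i.e.\ $\lambda_t(x)\neq x$.

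The key structural input is Remark~\ref{rmk:orbit-order}: a generating orbit must consist of elements whose additive order is divisible by every prime dividing $2n$, in particular by $2$ and by every odd prime factor of $n$. I would first analyze the constraint this places on the additive group of $B$: the additive group is abelian of order $2n$, and since it must contain an element whose order is divisible by $\mathrm{rad}(2n)$, the structure of $(B,+)$ is tightly restricted (it is forced to have a cyclic factor absorbing all the primes). Then, writing the $\lambda$-action concretely, the stabilizer of a generating element $x$ is a subgroup of the multiplicative group $D_{2n}$ that fixes an element of large additive order; I would argue this stabilizer must consist of rotations. Indeed, in all the worked cases above (Sections~\ref{sec:size-pq} through the $D_{4p}$ case), every stabilizer of a generating orbit turned out to be a group of rotations with no non-trivial core-free subgroup — the conjecture is the claim that this is a general phenomenon. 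Concretely, one expects that a reflection $t$, acting via $\lambda_t$ as an order-$2$ additive automorphism, must invert the cyclic part that carries the odd-prime torsion, and hence cannot fix a generator of that part; this would force $\lambda_t(x)\neq x$.

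The main obstacle is that this argument requires knowing the possible brace structures $(B,+,\circ)$ with $(B,\circ)\cong D_{2n}$ for \emph{arbitrary} $n$, whereas the body of the paper only uses the explicit classifications available for $n=p^2$ and $n=2p$. So the crux is to establish, intrinsically and without a case list, that in any such brace the $\lambda$-image of a reflection acts on $(B,+)$ as an involution that cannot centralize an element of additive order divisible by all primes dividing $2n$. I would try to extract this from the defining identity $a\circ(b+c)=a\circ b-a+a\circ c$ together with the fact that $D_{2n}$ is generated by reflections of multiplicative order $2$: a reflection $t$ satisfies $t\circ t = 0$ (the additive identity), and $\lambda_t$ is an additive automorphism with $\lambda_t^2 = \lambda_{t\circ t}=\mathrm{id}$, so $\lambda_t$ is an additive involution; the point is to show its fixed subgroup $\{b: \lambda_t(b)=b\}$ cannot contain a generating element. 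If $\lambda_t$ were to fix such an element, one should derive that the subgroup it generates (a $\lambda_t$-stable generating set) forces $\lambda_t=\mathrm{id}$, which would make $t$ central — impossible in $D_{2n}$ for $n>2$, and the cases $n\le 2$ are trivial. Turning ``$\lambda_t$ fixes a generator of the odd part'' into ``$\lambda_t$ fixes too much'' is where the real work lies; it is plausible that one also needs to invoke that $(B,\circ)$ being a non-abelian $2$-generated group of this specific shape constrains which additive involutions can arise as $\lambda_t$.

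I would structure the final write-up as: (1) reduce via Proposition~\ref{prop:size-dih-sols} to showing no reflection stabilizes a generating element; (2) record that $\lambda_t$ is an additive involution for any reflection $t$, and that a generating orbit's additive order is divisible by $\mathrm{rad}(2n)$; (3) show the fixed subgroup of such a $\lambda_t$ omits all such elements, by a rigidity argument on $(B,+,\circ)$; (4) conclude. If a fully general argument resists, a fallback is to phrase the result conditionally on the brace classification, or to handle it for the families $D_{2p^n}$ and $D_{4p^n}$ where the additive group is forced to be cyclic and the computation in the body generalizes verbatim.
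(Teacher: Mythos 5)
This statement is stated in the paper as an open \emph{conjecture}; the paper offers no proof of it, only the evidence of the cases worked out in the body. Your proposal is honest that it is a plan rather than a proof, but it is worth being explicit: as written it does not close the conjecture, and the place where it stalls is exactly the place where the paper itself stops. Your step (3) --- showing that the fixed subgroup of $\lambda_t$, for $t$ a reflection, cannot contain an element whose orbit additively generates $B$ --- is the entire content of the conjecture, and you offer no argument for it beyond the observation that it held in the finitely many explicitly classified braces. The reduction in step (1) is correct, and the observation that $\lambda_t^2=\lambda_{t\circ t}=\mathrm{id}$ is correct, but neither of these is the hard part.

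Two of the intermediate claims you lean on are also shaky. First, the chosen element $x$ need not be an additive generator of $(B,+)$: Remark~\ref{rmk:orbit-order} only gives that its additive order is divisible by every prime dividing $|B|$, and it is the whole $\lambda$-orbit, not $x$ alone, that must generate additively (compare the non-cyclic abelian case $\Z_p^2\times\Z_q$, where no single element generates). So ``$\lambda_t$ fixes a generator, hence fixes too much, hence $\lambda_t=\mathrm{id}$'' does not follow: the fixed subgroup of an additive involution can easily contain $x$ while the orbit still generates. Second, even granting $\lambda_t=\mathrm{id}$, this does not make $t$ central in $(B,\circ)$; it only puts $t$ in the socle $\{a:\lambda_a=\mathrm{id}\}$, which is a normal (not central) subgroup of the multiplicative group. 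A socle argument might actually be salvageable --- if a reflection lies in a normal subgroup of $D_{2n}$, its normal closure is large, which is in tension with core-freeness --- but that is a different argument from the one you sketch, and you would still need to get from ``$\lambda_t$ fixes $x$'' to ``$\lambda_t=\mathrm{id}$'' first, which is precisely the unproved step. Your fallback of proving the statement only for families where the brace classification is known is reasonable, but then the result is a case-check, not a proof of the conjecture.
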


\bibliographystyle{abbrv}
\bibliography{refs}

\end{document}